\newtheorem{theorem}{Theorem}[section]
\newtheorem{corollary}[theorem]{Corollary}
  \theoremstyle{definition}
  \theoremstyle{remark}
\newtheorem{remark}[theorem]{Remark}
\numberwithin{equation}{section}
\newcommand{ \dl }{ \delta_\textsc{l} }
\newcommand{ \tl }{ \vartheta_\textsc{l} }
\newcommand{ \vol }{ \operatorname{vol} }
\newcommand{ \defeq }{ \coloneqq }
\begin{document}

\title[Improved Bounds on Sidon Sets via Lattice Packings of Simplices]
      {Improved Bounds on Sidon Sets via\\Lattice Packings of Simplices}

\author{Mladen~Kova\v{c}evi\'{c} and Vincent~Y.~F.~Tan}

\address{Department of Electrical \& Computer Engineering,
         National University of Singapore.}
\email{mladen.kovacevic@nus.edu.sg}
\address{Department of Electrical \& Computer Engineering and Department of Mathematics,\linebreak
         National University of Singapore.}
\email{vtan@nus.edu.sg}
\thanks{The work is supported by the Singapore Ministry of Education (MoE) Tier 2 grant R-263-000-B61-112.}

\subjclass[2010]{Primary: 05B10, 05B40, 11B13, 11B75, 11B83, 11H31, 52C17;
                 Secondary: 94B65, 94B75, 20K99.}

\date{September 21, 2017.}

\keywords{$ B_h $ set, Sidon set, additive basis, $ h $-basis,
lattice packing, packing density, lattice covering, simplex.}

\begin{abstract}
  A $ B_h $ set (or Sidon set of order $ h $) in an Abelian group $ G $ is any
subset $ \{b_0, b_1, \ldots,b_{n}\} $ of $ G $ with the property that all the
sums $ b_{i_1} + \cdots + b_{i_h} $ are different up to the order of the summands.
Let $ \phi(h,n) $ denote the order of the smallest Abelian group containing a
$ B_h $ set of cardinality $ n + 1 $.
It is shown that
\[
  \lim_{h \to \infty} \frac{ \phi(h,n) }{ h^n } = \frac{1}{n! \ \dl(\triangle^n)} ,
\]
where $ \dl(\triangle^n) $ is the lattice packing density of an $ n $-simplex
in Euclidean space.
This determines the asymptotics exactly in cases where this density is known
($ n \leq 3 $) and gives improved bounds on $ \phi(h,n) $ in the remaining
cases.
The corresponding geometric characterization of bases of order $ h $ in finite
Abelian groups in terms of lattice coverings by simplices is also given.
\end{abstract}

\maketitle

\section{Introduction}

\subsection{$ \boldsymbol{B_h} $ sets}
\label{sec:Bh}

Let $ G $ be a finite additive Abelian group, and $ h, n $ positive integers.
A set $ B = \{ b_0, b_1, \ldots,b_{n} \} \subseteq G $ is said to be a $ B_h $ set
(or Sidon%
\footnote{Sidon's consideration of $ B_2 $ sets \cite{sidon}, motivated by applications
in the theory of Fourier series, is one of the first occurrences of this notion in the
literature.}
set of order $ h $) if all the sums $ b_{i_1} + \cdots + b_{i_h} $ with
$ 0 \leq i_1 \leq \cdots \leq i_h \leq n $ are different.
If $ B $ is a $ B_h $ set, then so is $ B - b_0 = \{ 0, b_1-b_0, \ldots,b_{n}-b_0 \} $,
and vice versa;
therefore, we shall assume in what follows that $ b_0 = 0 $.
With this convention $ B $ is a $ B_h $ set if and only if the sums
$ \alpha_1 b_{1} + \cdots + \alpha_n b_{n} $ are different for every choice of the
coefficients $ \alpha_i \in \mathbb{Z} $, $ \alpha_i \geq 0 $, $ \sum_{i=1}^n \alpha_i \leq h $
(here $ \alpha_i b_i $ denotes the sum of $ \alpha_i $ copies of the element $ b_i \in G $).

We should note that most of the works on $ B_h $ sets are focused either on the
group of integers $ G = \mathbb{Z} $ or the group of residues modulo $ m $, i.e., the
cyclic group $ G = \mathbb{Z}_m \defeq \mathbb{Z} / m \mathbb{Z} $ \cite{bose+chowla, chen, jia, obryant}.
For what we intend to discuss here it is convenient to consider general---not necessarily
cyclic---finite Abelian groups.
Such a generalization is also of interest in applications to coding theory \cite{derksen}.

Let $ \phi(h,n) $ denote%
\footnote{In \cite{jia, kovacevic} the symbol $ \phi(h,n+1) $ was used
to denote this quantity, $ n + 1 $ being the cardinality of the $ B_h $ set in question.
We shall, however, use $ n $ as an argument of this function, both to simplify the notation
and to indicate the fact that one of the elements can always be fixed to zero.}
the order (i.e., the cardinality) of the smallest Abelian group containing a $ B_h $
set of cardinality $ n + 1 $.
Understanding the interplay between $ h $, $ n $, and $ \phi(h, n) $, particularly in
various asymptotic regimes, is one of the basic problems in the study of $  B_h $ sets
\cite{chen, jia, obryant}.
We are interested here in the problem of understanding
\begin{equation}
\label{eq:mainproblem}
  \phi(h,n)  \quad \text{when} \quad h \to \infty .
\end{equation}

The following bounds are known:
\begin{equation}
\label{eq:boundsh}
  \frac{ (2n)! }{ 2^n (n!)^3 } (h-2n+2)^n  <  \phi(h,n)  \leq  (h+1)^n ,
\end{equation}
where the left-hand inequality holds for $ 0 \leq 2n - 2 \leq h $ \cite{kovacevic},
and the right-hand inequality holds for all positive $ h, n $ \cite{jia}.
In particular,
\begin{equation}
\label{eq:limBhkfixed}
  \frac{ (2n)! }{ 2^n (n!)^3 }  \leq  \lim_{h\to\infty} \frac{ \phi(h,n) }{ h^n }  \leq  1 .
\end{equation}
The main result of the present paper is a geometric characterization of the problem
\eqref{eq:mainproblem} in terms of lattice packings of simplices in $ \mathbb{R}^n $.
As a consequence of this observation, the upper bound in \eqref{eq:limBhkfixed} will
be improved from $ 1 $ to a rapidly decreasing function of $ n $,
and the exact value of the limit in \eqref{eq:limBhkfixed} will be determined
for $ n \leq 3 $.
(The fact that the limit in \eqref{eq:limBhkfixed} exists will also be proven in
Section \ref{sec:main}.)

Several results which are known to hold in a different asymptotic regime---$ h $ fixed
and $ n \to \infty $---should also be mentioned.
The following bound, valid for $ 1 \leq h/2 \leq n+1 $, is to the best of our knowledge
the best known%
\footnote{It was first derived for cyclic groups and in a slightly different form in \cite{jia}
(for even $ h $) and in \cite{chen} (for odd $ h $).
The bound in \eqref{eq:Bhhfixed} and its validity in all finite Abelian groups were
proven in \cite{kovacevic}.}
\cite{chen, jia, kovacevic}:
\begin{equation}
\label{eq:Bhhfixed}
  \phi(h,n)  >  \frac{(n + 1 - \lceil h/2 \rceil)^h}{\lceil h/2 \rceil ! \lfloor h/2 \rfloor !} .
\end{equation}
The construction of Bose and Chowla \cite{bose+chowla}, which generalizes Singer's
construction \cite{singer} of $ B_2 $ sets to arbitrary $ h $, asserts that
$ \phi(h,n) \leq n^h + n^{h-1} + \cdots + 1 $ for $ n $ a prime power, and so
\begin{equation}
\label{eq:limBhhfixed}
  \frac{1}{\lceil h/2 \rceil ! \lfloor h/2 \rfloor !}  \leq  \liminf_{n\to\infty} \frac{ \phi(h,n) }{n^h}  \leq  1 .
\end{equation}
Determining the exact value of the $ \liminf $ in \eqref{eq:limBhhfixed} for $ h \geq 3 $
remains an outstanding open problem in the field.

For an overview of known results on $ B_h $ sets and related objects, and an extensive
list of references, see \cite{obryant}.

\subsection{Lattice packings}
\label{sec:packings}

Let $ \mathcal L $ be a full-rank lattice in $ \mathbb{R}^n $, i.e.,
$ {\mathcal L} = \big\{ \alpha_1 {\bf v}_1 + \cdots + \alpha_n {\bf v}_n : \alpha_i \in \mathbb{Z}, i = 1, \ldots, n \big\} $
for some set of linearly independent vectors
$ \{ {\bf v}_1, \ldots, {\bf v}_n \} \subset \mathbb{R}^n $
(we say that $ {\mathcal L} $ is generated by the vectors $ {\bf v}_1, \ldots, {\bf v}_n $).
The determinant of $ {\mathcal L} $ is defined as
$ \det({\mathcal L}) \defeq |\det({\bf v}_1, \ldots, {\bf v}_n)| $ and
represents the volume of the fundamental parallelotope
$ P({\bf v}_1, \ldots, {\bf v}_n) \defeq \big\{ t_1 {\bf v}_1 + \cdots + t_n {\bf v}_n : t_i \in [0,1], i = 1, \ldots, n \big\} $.
Let $ K \subset \mathbb{R}^n $ be a compact convex set with non-empty interior.
$ (K, {\mathcal L}) $ is said to be a lattice packing in $ \mathbb{R}^n $ if, for
every $ {\bf x}, {\bf y} \in {\mathcal L} $, $ {\bf x} \neq {\bf y} $, the translates
$ K + {\bf x} $ and $ K + {\bf y} $ have no interior points in common.
The density of such a packing is defined as
$ \delta(K, {\mathcal L}) \defeq \vol(K) / \det({\mathcal L}) $, where $ \vol(K) $
denotes the volume of $ K $, and the lattice packing density of the body $ K $ is then
$ \dl(K) \defeq \sup_{\mathcal L} \delta(K, {\mathcal L}) $.
The supremum here is taken over all lattices in $ \mathbb{R}^n $ and is always
attained for some lattice $ {\mathcal L}^* $, i.e., $ \dl(K) = \delta(K, {\mathcal L}^*) $
\cite[Thm 30.1, p.\ 445]{gruber}.
For a very nice and extensive account of the theory of lattices and packing problems,
see \cite{gruber+lekk}.

We are interested here in lattice packings of the body
$ \triangle^n \defeq \big\{ {\bf x} \in \mathbb{R}^n : x_i \geq 0,\linebreak \sum_{i=1}^n x_i \leq 1 \big\} $
(here and hereafter $ {\bf x} $ stands for $ (x_1, \ldots, x_n) $).
$ \triangle^n $ is a simplex of volume $ \vol(\triangle^n) = \frac{1}{n!} $.
For $ n \leq 3 $, exact values of the lattice packing densities of $ \triangle^n $ are
known \cite[p.\ 249]{gruber+lekk}, \cite{hoylman}:
\begin{equation}
\label{eq:dlnsmall}
  \dl(\triangle^1) = 1,  \qquad  \dl(\triangle^2) = \frac{2}{3},  \qquad  \dl(\triangle^3) = \frac{18}{49} ,
\end{equation}
while in higher dimensions one can only give bounds on this quantity at this point \cite[p.\ 72]{rogers}:
\begin{equation}
\label{eq:dlsimplex}
  \frac{ 2 (n!)^2 }{ (2n)! }  \leq  \dl(\triangle^n)  <  \frac{ 2^n (n!)^2 }{ (2n)! } .
\end{equation}
A better lower bound%
\footnote{Lower bounds better than the one in \eqref{eq:dlsimplex} can also be obtained
by concrete constructions in some cases.
For example, the trivial packing $ (\triangle^n, \mathbb{Z}^n) $ has density $  1 / n! $,
which is larger than $ 2 (n!)^2 / (2n)! $ for $ n = 4, 5 $.
We shall omit from the discussion such special cases, however, except those for which the
exact value of $ \dl(\triangle^n) $ is known; see \eqref{eq:dlnsmall}.}
is known for $ n \to \infty $ \cite{fejes-toth}: 
\begin{equation}
\label{eq:dlupper}
  \frac{ (\log 2 + o(1))\ n\ (n!)^2 }{ (2n)! }  \leq  \dl(\triangle^n) .
\end{equation}

Packings in discrete spaces are defined in a similar way.
For example, if $ {\mathcal L} \subseteq \mathbb{Z}^n $ is a sublattice of $ \mathbb{Z}^n $
and $ S \subset \mathbb{Z}^n $ a finite set, we say that $ (S, {\mathcal L}) $ is a
lattice packing in $ \mathbb{Z}^n $ if the sets $ S + {\bf x} $ and $ S + {\bf y} $ are
\emph{disjoint} for every $ {\bf x}, {\bf y} \in {\mathcal L} $, $ {\bf x} \neq {\bf y} $.
Abusing notation slightly we denote the density of such a packing also by
$ \delta(S, {\mathcal L}) \defeq |S| / \det({\mathcal L}) $, and we let
$ \dl(S) \defeq \sup_{\mathcal L} \delta(S, {\mathcal L}) $, the supremum being
taken over all sublattices of $ \mathbb{Z}^n $.

We shall also need a discrete analogue of $ \triangle^n $, namely
$ \triangle^n_h \defeq \big\{ {\bf x} \in \mathbb{Z}^n : x_i \geq 0, \sum_{i=1}^n x_i \leq h \big\} $.
The set $ \triangle^n_h $ is a discrete simplex of ``sidelength'' $ h $, dimension $ n $,
and cardinality $ \binom{h + n}{n} $.

\section{Main results}
\label{sec:main}

The problem of constructing $ B_h $ sets of given cardinality in ``small'' Abelian
groups is closely related to that of constructing dense lattice packings of simplices
in Euclidean spaces.
This observation is the main result that we wish to report here and is stated
precisely in the following theorem.

A notational convention: For $ a \in \mathbb{R} $, $ S \subset \mathbb{R}^n $,
we let $ a S = \{ a {\bf x} : {\bf x} \in S \} $.

\begin{theorem}
\label{thm:main}
  For every fixed $ n \geq 1 $ and $ \epsilon > 0 $,
\begin{equation}
\label{eq:mainl}
  \frac{ 1 }{ n! \ \dl(\triangle^n) } h^n  \leq  \phi(h,n)  <  \frac{ (1 + \epsilon) }{ n!\ \dl(\triangle^n) } h^n ,
\end{equation}
the lower bound being valid for every $ h \geq 1 $, and the upper bound for
$ h \geq h_0(n,\epsilon) $.\linebreak
Consequently,
\begin{equation}
\label{eq:main}
  \lim_{h\to\infty} \frac{ \phi(h,n) }{ h^n } = \frac{ 1 }{ n!\ \dl(\triangle^n) } .
\end{equation}
\end{theorem}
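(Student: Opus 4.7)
The plan is to reformulate $\phi(h,n)$ in purely lattice-theoretic terms and then pass between the discrete and continuous settings by a scaling argument. The key reformulation is the identity
\[
  \phi(h,n) = \min\bigl\{ \det(\mathcal{L}) : \mathcal{L} \subseteq \mathbb{Z}^n \text{ a full-rank sublattice with } (\triangle^n_h, \mathcal{L}) \text{ a packing in } \mathbb{Z}^n \bigr\},
\]
obtained from the correspondence $B \leftrightarrow \ker\pi$, where $\pi \colon \mathbb{Z}^n \to G$ sends $\alpha \mapsto \sum_i \alpha_i b_i$: when $B = \{0, b_1, \ldots, b_n\}$ generates $G$, the map $\pi$ is surjective with $\det(\ker\pi) = |G|$, and the $B_h$ property is precisely the injectivity of $\pi|_{\triangle^n_h}$, equivalently $(\triangle^n_h - \triangle^n_h) \cap \ker\pi = \{0\}$. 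Conversely, any such $\mathcal{L}$ yields a $B_h$ set $\{0, e_1 + \mathcal{L}, \ldots, e_n + \mathcal{L}\}$ in $\mathbb{Z}^n/\mathcal{L}$. One may restrict attention to $G$ generated by $B$ since $\langle B \rangle$ has order $\leq |G|$.

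For the lower bound I would combine this with the explicit formula $\triangle^n_h - \triangle^n_h = \{w \in \mathbb{Z}^n : \sum_i w_i^+ \leq h,\ \sum_i w_i^- \leq h\}$. Taking an optimal $\mathcal{L}$, the packing condition forces every nonzero $w \in \mathcal{L}$ to satisfy $\sum_i w_i^+ \geq h+1$ or $\sum_i w_i^- \geq h+1$; dividing by $h$, this is the condition that $\bigl( (1 + \tfrac{1}{h})\triangle^n,\ \tfrac{1}{h}\mathcal{L} \bigr)$ is a continuous lattice packing in $\mathbb{R}^n$. Since its density is at most $\dl(\triangle^n)$, we obtain $\phi(h,n) = \det(\mathcal{L}) \geq (h+1)^n / (n!\,\dl(\triangle^n))$ for every $h \geq 1$, which is slightly stronger than the left-hand inequality in \eqref{eq:mainl}.

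For the upper bound I would reverse the direction. Given $\eta > 0$, take $\mathcal{L}^* \subseteq \mathbb{R}^n$ with $\delta(\triangle^n, \mathcal{L}^*) \geq \dl(\triangle^n) - \eta$. By first shrinking $\triangle^n$ slightly to create slack and then perturbing, $\mathcal{L}^*$ can be replaced by a lattice with rational generators packing the slightly smaller simplex with density $\geq \dl(\triangle^n) - O(\eta)$; clearing denominators produces an integer lattice $\Lambda_0 \subseteq \mathbb{Z}^n$ such that $(c_0 \triangle^n, \Lambda_0)$ is a packing in $\mathbb{R}^n$ for some integer $c_0 = c_0(n,\eta)$. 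Scaling by any positive integer $k$ preserves density and integrality. Choosing $k = \lceil (h+1)/c_0 \rceil$ so that $kc_0 \leq h + c_0$, the lower-bound analysis read in reverse shows that the continuous packing $(kc_0 \triangle^n, k\Lambda_0)$ forces the integer packing $(\triangle^n_{kc_0-1}, k\Lambda_0)$, which contains $(\triangle^n_h, k\Lambda_0)$ as a sub-packing. Hence
\[
  \phi(h,n) \leq \det(k\Lambda_0) = \frac{(kc_0)^n}{n!\,\delta(\triangle^n, \Lambda_0)} \leq \frac{(h + c_0)^n}{n!\,(\dl(\triangle^n) - O(\eta))},
\]
and for $\eta$ sufficiently small and $h \geq h_0(n,\epsilon)$ the right-hand side is bounded by $(1+\epsilon) h^n/(n!\,\dl(\triangle^n))$, establishing \eqref{eq:mainl} and therefore \eqref{eq:main}.

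The main obstacle is the rational-approximation step: verifying that any near-optimal lattice packing of $\triangle^n$ can be approximated, with arbitrarily small loss of density, by a packing whose lattice has rational (hence, after integer rescaling, integer) generators. This amounts to a continuity property of $\delta(\triangle^n, \cdot)$ on the space of full-rank lattices, which is standard but requires care, since a naive perturbation of $\mathcal{L}^*$ might push a lattice point into $\mathrm{int}(\triangle^n - \triangle^n)$ and destroy the packing---hence the preliminary shrinking of the simplex to create a buffer. Once this is in hand, the remainder is bookkeeping with the difference-body formula for $\triangle^n_h$ and with the $O(1/h)$ comparison of $(h + c_0)^n$ and $h^n$.
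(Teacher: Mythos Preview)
Your proposal is correct and follows essentially the same strategy as the paper: the identification of $\phi(h,n)$ with the minimal determinant of a sublattice $\mathcal{L}\subseteq\mathbb{Z}^n$ packing $\triangle^n_h$, followed by a discrete-to-continuous comparison for the lower bound and a shrink-then-discretize argument for the upper bound.

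The implementation differs in two minor but pleasant ways. For the lower bound, the paper simply observes that a $\mathbb{Z}^n$-packing of $\triangle^n_h$ is already an $\mathbb{R}^n$-packing of $h\triangle^n$, yielding $\phi(h,n)\geq h^n/(n!\,\dl(\triangle^n))$; your use of the explicit difference-body description and integrality of the lattice squeezes out the slightly sharper $(h+1)^n/(n!\,\dl(\triangle^n))$. For the upper bound, the paper rounds the generators of $\mathcal{L}^*$ to $\tfrac{1}{h}\mathbb{Z}^n$ for each $h$ and invokes continuity of fundamental domains (citing Groemer) to guarantee that the rounded lattice still packs $(1-\varepsilon)\triangle^n$; your route---one fixed rational approximation, clear denominators, then integer-scale by $k\approx h/c_0$---is more elementary and gives the explicit bound $\phi(h,n)\leq (h+c_0)^n/(n!\,\delta)$ with $c_0$ independent of $h$. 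Both arguments hinge on the same buffer idea (shrinking the simplex before perturbing), which you correctly identify as the only delicate step. The notational slip $\delta(\triangle^n,\Lambda_0)$ in your displayed inequality should read $\delta(c_0\triangle^n,\Lambda_0)$, but the intended meaning is clear.
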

\begin{proof}
  The proof builds on the geometric interpretation of $ B_h $ sets given in
\cite[Thm 2.1]{kovacevic}, stating that
\begin{inparaenum}
\item[(a)]
if $ \{0, b_1, \ldots, b_n\} $ is a $ B_h $ set in an Abelian group $ G $, then
there is a lattice packing $ \left(\triangle^n_h, {\mathcal L}\right) $ in $ \mathbb{Z}^n $
with $ G \cong \mathbb{Z}^n/{\mathcal L} $, $ |G| = \det({\mathcal L}) $, and
\item[(b)]
if $ (\triangle^n_h, {\mathcal L}) $ is a lattice packing in $ \mathbb{Z}^n $, then
the group $ \mathbb{Z}^n / {\mathcal L} $ contains a $ B_h $ set of cardinality $ n + 1 $.
\end{inparaenum}
In particular, the lattice packing density of the discrete simplex $ \triangle^n_h $ is
\begin{equation}
\label{eq:dlnh}
  \dl(\triangle^n_h) = \frac{\binom{h + n}{n}}{ \phi(h,n) } .
\end{equation}

To prove the left-hand inequality in \eqref{eq:mainl}, observe that any packing
$ \left(\triangle^n_h, {\mathcal L}\right) $ in $ \mathbb{Z}^n $ induces a packing
$ \left( h \triangle^n, {\mathcal L} \right) $ in $ \mathbb{R}^n $, and that the
density of the latter cannot exceed $ \dl(\triangle^n) $ by definition.
If $ \mathcal L $ is optimal, in the sense that $ \det({\mathcal L}) = \phi(h,n) $,
we get
\begin{equation}
\label{eq:philower}
  \phi(h,n)  =     \det({\mathcal L})
             \geq  \frac{ \vol(h\triangle^n) }{ \dl(\triangle^n) }
             =     \frac{ h^n/n! }{ \dl(\triangle^n) } .
\end{equation}

We now prove the right-hand inequality in \eqref{eq:mainl}.
This statement is, by \eqref{eq:dlnh} and the fact that
$ \lim_{h \to \infty} \frac{1}{h^n} \binom{h + n}{n} = \frac{1}{n!} $, equivalent to
the following: for every $ \varepsilon' \in (0,1) $ and $ h $ large enough,
$ \dl(\triangle^n_h) > (1-\varepsilon') \dl(\triangle^n) $.
So let $ \varepsilon' \in (0,1) $ and choose $ \varepsilon \in (0,1) $ so that
$ 1 - \varepsilon' < (1 - \varepsilon)^n $.
Let $ (\triangle^n, {\mathcal L}^*) $ be an optimal lattice packing in $ \mathbb{R}^n $.
In the discretization argument that follows, we shall need the simplices in the packing
to be bounded away from each other, so consider instead the packing $ ((1-\varepsilon) \triangle^n, {\mathcal L}^*) $
of density $ (1-\varepsilon)^n \dl(\triangle^n) $ (see Figure \ref{fig:packing}).
Suppose that $ {\mathcal L}^* $ is generated by the vectors $ {\bf v}_1, \ldots, {\bf v}_n $.
Let $ {\bf v}_{i;h} $ be the vector in $ \frac{1}{h} \mathbb{Z}^n $ obtained by
rounding each coordinate of $ {\bf v}_i $ to the nearest point in $ \frac{1}{h} \mathbb{Z} $
(breaking ties arbitrarily),
and let $ {\mathcal L}_h $ be the lattice generated by $ {\bf v}_{1;h}, \ldots, {\bf v}_{n;h} $.
Then clearly $ \lim_{h \to \infty} {\bf v}_{i;h} = {\bf v}_i $ and so
$ \lim_{h \to \infty} {\mathcal L}_h = {\mathcal L}^* $ in the natural topology on the space
of lattices \cite[Ch.\ 17.1]{gruber+lekk}.
Continuity properties of fundamental domains of lattices \cite{groemer} then imply that
$ ((1-\varepsilon) \triangle^n, {\mathcal L}_h) $ is a packing in $ \mathbb{R}^n $ for
all $ h \geq h_0(n, \varepsilon) $.
This further implies that
$ \left( (1-\varepsilon) \triangle^n \cap \frac{1}{h} \mathbb{Z}^n, {\mathcal L}_h \right) $
is a packing in $ \frac{1}{h} \mathbb{Z}^n $, which is equivalent to saying that
$ \left( (1-\varepsilon) h \triangle^n \cap \mathbb{Z}^n, h {\mathcal L}_h \right) $ is a
packing in $ \mathbb{Z}^n $.
Since $ \lim_{h \to \infty} \det({\mathcal L}_h) = \det({\mathcal L}^*) $, the density
of the latter satisfies
\begin{equation}
  \lim_{h \to \infty} \frac{ \left| (1-\varepsilon) h \triangle^n \cap \mathbb{Z}^n \right| }{ \det(h {\mathcal L}_h) }
  = \frac{ \vol((1-\varepsilon) \triangle^n) }{ \det({\mathcal L}^*) }
  = (1-\varepsilon)^n \dl(\triangle^n) .
\end{equation}
We have thus established the existence of a family of packings of $ \triangle^n_h $ in
$ \mathbb{Z}^n $ with asymptotic density
$ (1-\varepsilon)^n \dl(\triangle^n) > (1-\varepsilon') \dl(\triangle^n) $.
Therefore, for $ h $ large enough, $ \dl(\triangle^n_h) >  (1-\varepsilon') \dl(\triangle^n) $,
as claimed.

From this and \eqref{eq:philower} we conclude that
\begin{equation}
\label{eq:dlcont}
  \lim_{h\to\infty} \dl(\triangle^n_h) = \dl(\triangle^n) ,
\end{equation}
which is equivalent to \eqref{eq:main}.
\end{proof}

\begin{figure}
\centering
  \includegraphics[width=0.75\columnwidth]{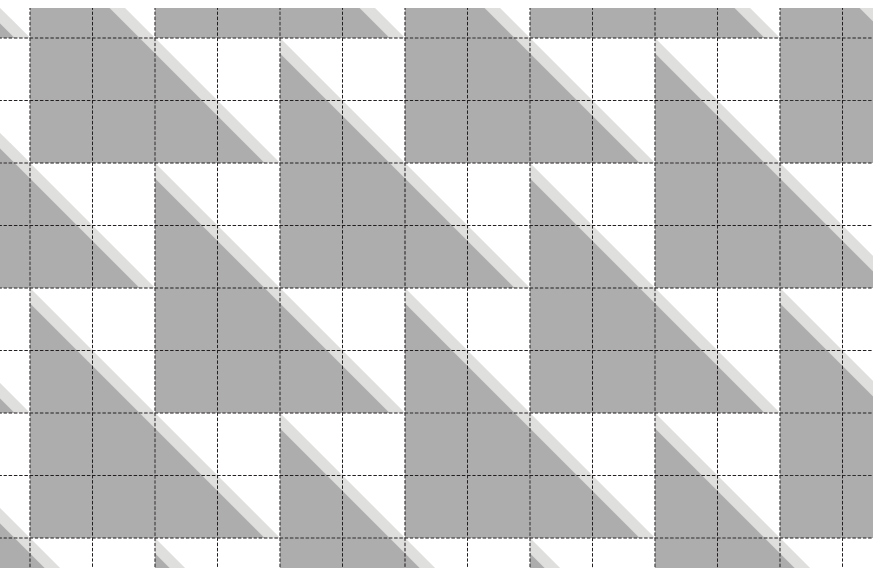}
  \caption{The optimal lattice packing of simplices $ (4 \triangle^2, {\mathcal L}^*) $
	         in $ \mathbb{R}^2 $ (light gray), the packing $ ((1-\varepsilon) 4 \triangle^2, {\mathcal L}^*) $ (dark gray),
           and the $ \mathbb{Z}^2 $ grid.}
\label{fig:packing}
\end{figure}%

\begin{remark}
  Analogous statements can be made in many other scenarios.
For example, lattice packings of discrete cross-polytopes
$ \Diamond^n_r \defeq \big\{ {\bf x} \in \mathbb{Z}^n : \sum_{i=1}^n |x_i| \leq r \big\} $
in $ \mathbb{Z}^n $ are of interest in coding theory as they represent linear
codes of radius $ r $ under the $ \ell_1 $ metric and are closely related to codes
in the so-called Lee metric ($ \ell_1 $ metric on the torus $ \mathbb{Z}_m^n $) 
\cite[Ch.\ 10]{roth}.
Their asymptotic behavior when $ r \to \infty $ can, similarly to \eqref{eq:dlcont},
be expressed as
\begin{equation}
  \lim_{r\to\infty} \dl(\Diamond^n_r) = \dl(\Diamond^n) ,
\end{equation}
where $ \Diamond^n \defeq \big\{ {\bf x} \in \mathbb{R}^n : \sum_{i=1}^n |x_i| \leq 1 \big\} $
is the $ n $-dimensional cross-polytope in $ \mathbb{R}^n $.
\end{remark}

Using the known facts about lattice packings of simplices stated in
\eqref{eq:dlnsmall}--\eqref{eq:dlupper}, we obtain the following corollary.

\begin{corollary}
\label{thm:maincor}
  For every fixed $ n \geq 1 $ and $ \epsilon > 0 $,
\begin{equation}
\label{eq:newlowerh}
  \frac{ (2n)! }{ 2^n (n!)^3 } h^n  <  \phi(h,n)  <  (1 + \epsilon) \frac{ (2n)! }{ 2 (n!)^3 } h^n ,
\end{equation}
the lower bound being valid for every $ h \geq 1 $, and the upper bound for
$ h \geq h_0(n,\epsilon) $.\linebreak
Furthermore, for $ n \leq 3 $,
\begin{equation}
\label{eq:nsmall}
  \lim_{h\to\infty} \frac{ \phi(h,1) }{ h }   = 1 ,            \qquad
  \lim_{h\to\infty} \frac{ \phi(h,2) }{ h^2 } = \frac{3}{4} ,  \qquad
  \lim_{h\to\infty} \frac{ \phi(h,3) }{ h^3 } = \frac{49}{108} ,
\end{equation}
and, for $ n \geq 4 $,
\begin{equation}
\label{eq:boundsnew}
  \frac{ (2n)! }{ 2^n (n!)^3 }  <  \lim_{h\to\infty} \frac{ \phi(h,n) }{h^n}  \leq  \frac{ (2n)! }{ 2 (n!)^3 } .
\end{equation}
For $ n \to \infty $ we have
\begin{equation}
\label{eq:boundsnew2}
  \lim_{h\to\infty} \frac{ \phi(h,n) }{h^n}  \leq  \frac{ (2n)! }{ (\log 2 + o(1))\ n\ (n!)^3 } = {\mathcal O}((4 e)^n n^{-n-2}) .
\end{equation}
\end{corollary}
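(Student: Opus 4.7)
The plan is to obtain the corollary as a direct arithmetic unpacking of Theorem~\ref{thm:main}: once the known values and bounds on $\dl(\triangle^n)$ from \eqref{eq:dlnsmall}--\eqref{eq:dlupper} are substituted into the identity $\lim_{h\to\infty}\phi(h,n)/h^n = 1/(n!\,\dl(\triangle^n))$ and its effective finite-$h$ companion \eqref{eq:mainl}, all four assertions drop out. I therefore intend to proceed by a single pass of substitutions, and then carry out at the very end the only step (the estimate in \eqref{eq:boundsnew2}) that calls for a non-trivial Stirling computation.

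For the finite-$h$ inequalities \eqref{eq:newlowerh}, I would insert the upper Rogers bound $\dl(\triangle^n) < 2^n(n!)^2/(2n)!$ from \eqref{eq:dlsimplex} into the left half of \eqref{eq:mainl}, which flips to $\phi(h,n) > (2n)!/(2^n(n!)^3)\cdot h^n$, and dually insert the lower bound $\dl(\triangle^n)\geq 2(n!)^2/(2n)!$ into the right half of \eqref{eq:mainl} to obtain $\phi(h,n) < (1+\epsilon)(2n)!/(2(n!)^3)\cdot h^n$ for $h\geq h_0(n,\epsilon)$. The exact evaluations \eqref{eq:nsmall} then follow by plugging $\dl(\triangle^1)=1$, $\dl(\triangle^2)=2/3$, $\dl(\triangle^3)=18/49$ directly into \eqref{eq:main}, and \eqref{eq:boundsnew} is obtained by applying the same two Rogers estimates to the limit identity \eqref{eq:main} rather than to its finite-$h$ version.

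The only step that goes beyond substitution is the asymptotic rewriting in \eqref{eq:boundsnew2}. Combining the Fejes~Tóth lower bound \eqref{eq:dlupper} with \eqref{eq:main} immediately yields
\[
\lim_{h\to\infty}\frac{\phi(h,n)}{h^n} \;\leq\; \frac{(2n)!}{(\log 2 + o(1))\,n\,(n!)^3},
\]
so the task reduces to showing that the right-hand side is $\mathcal{O}((4e)^n n^{-n-2})$. I would apply Stirling's formula $m!\sim\sqrt{2\pi m}(m/e)^m$ to get $(2n)!/(n!)^3 \sim \bigl(1/(\sqrt{2}\,\pi)\bigr)\,(4e)^n/n^{n+1}$; dividing by the extra $n$ in the denominator produces the claimed $\mathcal{O}((4e)^n n^{-n-2})$. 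No genuine obstacle arises anywhere in the argument; this short Stirling estimate is the only place where anything beyond bookkeeping is required, so I would anticipate the main drafting effort going into making the chain of substitutions readable rather than into overcoming a hard step.
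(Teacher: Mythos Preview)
Your proposal is correct and matches the paper's approach exactly: the paper states only that the corollary follows from the known facts \eqref{eq:dlnsmall}--\eqref{eq:dlupper} about $\dl(\triangle^n)$ combined with Theorem~\ref{thm:main}, which is precisely the chain of substitutions you describe. Your explicit Stirling verification of the $\mathcal{O}((4e)^n n^{-n-2})$ estimate is a welcome elaboration of a step the paper leaves implicit.
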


Comparing \eqref{eq:boundsh} and \eqref{eq:newlowerh} we see that the lower bound
on $ \phi(h,n) $ is slightly improved, while the improvement of the upper bound is
significant.
In particular, the upper bound on $ \lim_{h\to\infty} \phi(h,n) / h^n $ is improved
from $ 1 $ to a rapidly decreasing function of $ n $ (see \eqref{eq:limBhkfixed} and
\eqref{eq:boundsnew2}).

For $ n \in \{1,2\} $ one can in fact give exact expressions for $ \phi(h,n) $:
$ \phi(h,1) = h + 1 $ and $ \phi(2r,2) = 3 r^2 + 3 r + 1 $, $ \phi(2r-1,2) = 3 r^2 $
(the case $ n = 1 $ is trivial, and the case $ n = 2 $ follows from the existence
of certain tilings of $ \mathbb{Z}^2 $; see Section \ref{sec:tilings}).

\section{Additional remarks and open problems}

\subsection{Equivalent packings and perfect $ \boldsymbol{B_h} $ sets}
\label{sec:tilings}

It is easy to show that if $ (\triangle^n_h, {\mathcal L}) $ is a packing in
$ \mathbb{Z}^n $, then so is $ (\triangle^n_{r} - \triangle^n_{t}, {\mathcal L}) $
for any $ r, t \geq 0 $ with $ r + t = h $, and vice versa.
Here $ \triangle^n_{r} - \triangle^n_{t} \defeq
\big\{ {\bf x} - {\bf y} : {\bf x} \in \triangle^n_{r}, {\bf y} \in \triangle^n_{t} \big\} $.

In this sense, packings of discrete simplices $ \triangle^n_{2r} $ are equivalent%
\footnote{This is a discrete version of the central symmetrization argument in geometry
\cite[p.\ 443]{gruber}.}
to packings of the sets $ \triangle^n_{r} - \triangle^n_{r} $.
It turns out \cite{kovacevic} that there exist perfect lattice packings, i.e.,
lattice tilings of $ \mathbb{Z}^n $ by $ \triangle^n_{r} - \triangle^n_{r} $, when
\begin{enumerate}
\item[1.)]  $ n \in \{1, 2\} $, $ r \geq 1 $, and
\item[2.)]  $ n $ a prime power, $ r = 1 $.
\end{enumerate}
Consequently, in these cases we have an exact expression%
\footnote{The expression for $ |\triangle^n_{r} - \triangle^n_{t}| $ is given in
\cite[Lem.\ 1.2]{kovacevic} ($ \triangle^n_{r} - \triangle^n_{t} $ is denoted by
$ S_n(r,t) $ there).}
for $ \phi(2r, n) $: $ \phi(2r, n) = \linebreak|\triangle^n_{r} - \triangle^n_{r}| $.
The corresponding $ B_{2r} $ sets might therefore be called \emph{perfect} $ B_{2r} $ sets.
It is an open problem to (dis)prove that these are the only cases when such sets exist
\cite{kovacevic}.
The corresponding question for tilings by discrete cross-polytopes $ \Diamond^n_r $
(perfect codes in $ \mathbb{Z}^n $ under $ \ell_1 $ metric) is known as the Golomb--Welch
conjecture and has inspired a significant amount of research since it was originally
published in \cite{golomb+welch}.

Similarly, one could define perfect $ B_{2r-1} $ sets as those that correspond to
lattice tilings by $ \triangle^n_{r} - \triangle^n_{r-1} $.
It can be verified directly that such sets exist for
\begin{enumerate}
\item[1.)] $ n \in \{1, 2\} $, $ r \geq 1 $ (see Figure \ref{fig:tiling}), and
\item[2.)] $ n \geq 1 $, $ r = 1 $.
\end{enumerate}
Again, whether these are the only cases is not known.

It is known that there are no perfect $ B_h $ sets of cardinality $ n + 1 \geq 4 $
for $ h $ large enough \cite[Thm 3.5]{kovacevic}.

\begin{figure}
\centering
  \includegraphics[width=0.75\columnwidth]{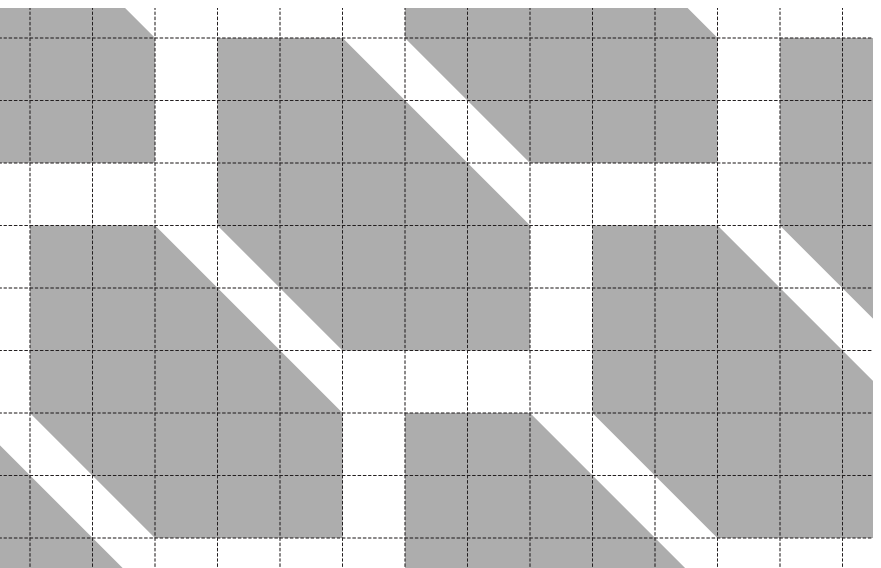}
  \caption{Lattice tiling of $ \mathbb{Z}^2 $ by the set $ \triangle^2_{3} - \triangle^2_{2} $.}
\label{fig:tiling}
\end{figure}%

\subsection{Asymptotics}

It follows from the discussion in the previous subsection that Theorem \ref{thm:main}
and Corollary \ref{thm:maincor} can be rephrased in terms of the lattice packing densities
of sets $ \triangle^n_{r} - \triangle^n_{t} $.
In particular, for all $ n \geq 3 $,
\begin{equation}
  \lim_{r \to \infty}  \dl(\triangle^n_{r} - \triangle^n_{r}) 
   = \dl(\triangle^n - \triangle^n)
   < 1 ,
\end{equation}
where $ \triangle^n - \triangle^n $ is the difference body of the simplex $ \triangle^n $
(hexagon for $ n = 2 $, cuboctahedron for $ n = 3 $ \cite{hoylman}).

Is it true that $ \limsup_{n \to \infty} \dl(\triangle^n - \triangle^n) < 1 $?
(This is equivalent to an asymptotic improvement of the upper bound in \eqref{eq:dlsimplex}.)
If so, can the limits be interchanged to conclude
$ \lim_{r \to \infty} \limsup_{n \to \infty} \dl(\triangle^n_{r} - \triangle^n_{r}) < 1 $?
A positive answer to the latter would give an improved lower bound in \eqref{eq:limBhhfixed}
and would imply non-existence results for perfect $ B_h $ sets.

\subsection{Cyclic groups}

As we mentioned in Section \ref{sec:Bh}, the literature on $ B_h $ sets in finite
groups is mostly focused on the cyclic case, $ G = \mathbb{Z}_m $.
It is worthwhile investigating whether Theorem \ref{thm:main} remains valid if we
restrict ourselves to cyclic groups only.
Namely, let $ \phi_\text{c}(h,n) $ denote the order of the smallest \emph{cyclic}
group containing a $ B_h $ set of cardinality $ n + 1 $.
Since $ \phi_\text{c}(h,n) \geq \phi(h,n) $, we have by Theorem \ref{thm:main}
\begin{equation}
\label{eq:cycliclim}
  \lim_{h\to\infty} \frac{ \phi_\text{c}(h,n) }{ h^n } \geq \frac{ 1 }{ n!\ \dl(\triangle^n) } .
\end{equation}
Does equality hold in \eqref{eq:cycliclim} for every $ n $?

\subsection{Bases of order $ \boldsymbol h $ and lattice coverings by simplices}

Let $ G $ be a finite Abelian group, as before.
A subset $ C = \{c_0, c_1, \ldots, c_n\} \subseteq G $ is said to be a basis of order
$ h $ (or $ h $-basis) \cite[Sec.\ I.1]{sequences} of $ G $ if every element of the
group can be expressed as $ c_{i_1} + \cdots + c_{i_h} $ for some
$ 0 \leq i_1 \leq \cdots \leq i_h \leq n $.
These objects are natural covering analogues of $ B_h $ sets.
In the following we state some results pertaining to their geometric interpretation,
in analogy to those obtained for $ B_h $ sets.
Note that, if $ C $ is an $ h $-basis, then so is $ C - c_0 = \{ 0, c_1 - c_0, \ldots, c_n - c_0 \} $,
and vice versa; we can therefore assume that $ c_0 = 0 $.
Note also that $ C = \{ 0, c_1, \ldots, c_n \} $ is an $ h $-basis for $ G $ if and
only if
\begin{equation}
\label{eq:basis}
  G = \Big\{ \alpha_1 c_1 + \cdots + \alpha_n c_n : \boldsymbol{\alpha} \in \triangle^n_h \Big\} .
\end{equation}

For a lattice $ \mathcal L $ and a convex body $ K $ in $ \mathbb{R}^n $, $ (K, {\mathcal L}) $
is said to be a lattice covering of $ \mathbb{R}^n $ if
$ \bigcup_{{\bf x} \in {\mathcal L}} (K + {\bf x}) = \mathbb{R}^n $.
The density of such a covering is defined as
$ \vartheta(K, {\mathcal L}) \defeq \vol(K) / \det({\mathcal L}) $,
and the lattice covering density of the body $ K $ is then
$ \tl(K) \defeq \inf_{\mathcal L} \vartheta(K, {\mathcal L}) $.
The infimum here is taken over all lattices in $ \mathbb{R}^n $ and is attained
for some $ {\mathcal L}^* $, i.e., $ \tl(K) = \vartheta(K, {\mathcal L}^*) $
\cite[Thm 31.1, p.\ 456]{gruber}.
In the case of the simplex $ \triangle^n $, the exact value of the lattice covering
density is known only for $ n = 1, 2 $ \cite[p.\ 249]{gruber+lekk}:
\begin{equation}
\label{eq:coveringexact}
  \tl(\triangle^1) = 1 ,  \qquad  \tl(\triangle^2) = \frac{3}{2} ,
\end{equation}
while for $ n \geq 3 $ we have the following bounds:
\begin{equation}
\label{eq:coveringbounds}
  1 + 2^{-(3n + 7)}  \leq  \tl(\triangle^n)  \leq  n^{\log_2 \log_2 n + c} ,
\end{equation}
where $ c $ is some absolute constant (for the upper bound see \cite[p.\ 19]{rogers};
the lower bound was recently derived in \cite{xue+zong}).
The definitions for coverings of $ \mathbb{Z}^n $ with a finite set $ K \subset \mathbb{Z}^n $
are similar, with $ \vol(K) $ replaced by $ |K| $.

The following claim is an instance of the familiar group--theoretic formulations of
lattice packing/tiling/covering problems \cite{stein+szabo} (see also, e.g., \cite{stein84}
and the references therein).
It states that $ h $-bases in Abelian groups and lattice coverings of $ \mathbb{Z}^n $ by
discrete simplices are essentially equivalent notions.

\begin{theorem}
\label{thm:Ch}
  If $ C = \{ 0, c_1, \ldots, c_n \} $ is an $ h $-basis for an Abelian group $ G $,
then $ (\triangle^n_h, {\mathcal L}) $ is a covering of $ \mathbb{Z}^n $, where
$ {\mathcal L} = \big\{ {\bf x} \in \mathbb{Z}^n : \sum_{i=1}^{n} x_i c_i = 0 \big\} $,
and $ G $ is isomorphic to $ \mathbb{Z}^n / {\mathcal L} $.
Conversely, if $ (\triangle^n_h, {\mathcal L}') $ is a lattice covering of $ \mathbb{Z}^n $,
then the group $ \mathbb{Z}^n / \mathcal{L}' $ contains an $ h $-basis of cardinality
at most $ n + 1 $.
\end{theorem}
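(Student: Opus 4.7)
The plan is to introduce the group homomorphism $\varphi\colon\mathbb{Z}^n\to G$ defined by $\varphi(\mathbf{x})\defeq\sum_{i=1}^n x_i c_i$ and translate both directions of the theorem into elementary facts about this map. The key observation, parallel to \cite[Thm~2.1]{kovacevic} for $B_h$ sets, is that membership in $\triangle^n_h$ precisely encodes the nonnegative-coefficient, total-degree-at-most-$h$ condition on $h$-basis representations appearing in \eqref{eq:basis}.

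For the forward implication, assume $C=\{0,c_1,\ldots,c_n\}$ is an $h$-basis for $G$. The lattice $\mathcal{L}$ is by construction $\ker\varphi$, and \eqref{eq:basis} states that the restriction of $\varphi$ to $\triangle^n_h$ is already surjective, so the first isomorphism theorem yields $G\cong\mathbb{Z}^n/\mathcal{L}$; finiteness of $G$ then forces $\mathcal{L}$ to be a full-rank sublattice of $\mathbb{Z}^n$. For any $\mathbf{x}\in\mathbb{Z}^n$, this surjectivity furnishes some $\boldsymbol{\alpha}\in\triangle^n_h$ with $\varphi(\boldsymbol{\alpha})=\varphi(\mathbf{x})$, whence $\mathbf{v}\defeq\mathbf{x}-\boldsymbol{\alpha}\in\mathcal{L}$ and $\mathbf{x}\in\triangle^n_h+\mathbf{v}$. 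Therefore $\bigcup_{\mathbf{v}\in\mathcal{L}}(\triangle^n_h+\mathbf{v})=\mathbb{Z}^n$, as required.

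For the converse, given a lattice covering $(\triangle^n_h,\mathcal{L}')$ of $\mathbb{Z}^n$, set $G\defeq\mathbb{Z}^n/\mathcal{L}'$ and $c_i\defeq\mathbf{e}_i+\mathcal{L}'$, where $\mathbf{e}_i$ denotes the $i$-th standard basis vector. For an arbitrary coset $g=\mathbf{x}+\mathcal{L}'$, the covering hypothesis yields $\boldsymbol{\alpha}\in\triangle^n_h$ and $\mathbf{v}\in\mathcal{L}'$ with $\mathbf{x}=\boldsymbol{\alpha}+\mathbf{v}$, so $g=\sum_{i=1}^n\alpha_i c_i$ with the $\alpha_i\geq 0$ and $\sum_i\alpha_i\leq h$; by \eqref{eq:basis}, $\{0,c_1,\ldots,c_n\}$ is an $h$-basis for $G$, and the ``at most $n+1$'' qualifier simply accommodates possible coincidences among $0,c_1,\ldots,c_n$. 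No substantive obstacle is anticipated: the argument is essentially a reformulation of the quotient map $\mathbb{Z}^n\to\mathbb{Z}^n/\mathcal{L}$, with the only bookkeeping point being the verbatim identification of $\triangle^n_h$ with the set of admissible coefficient vectors in \eqref{eq:basis}.
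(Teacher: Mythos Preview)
Your proof is correct and follows essentially the same approach as the paper's own argument: both directions hinge on the homomorphism $\mathbf{x}\mapsto\sum_i x_i c_i$ (which you name explicitly as $\varphi$ and the paper uses implicitly), identifying $\mathcal{L}$ with its kernel and the $h$-basis property with surjectivity of the restriction to $\triangle^n_h$. The only cosmetic difference is that you invoke the first isomorphism theorem by name and note that finiteness of $G$ forces $\mathcal{L}$ to have full rank, whereas the paper verifies the isomorphism $[\mathbf{y}]\mapsto\sum_i y_i c_i$ directly.
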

\begin{proof}
  Let $ C $ be an $ h $-basis of $ G $, and
$ {\mathcal L} = \big\{ {\bf x} \in \mathbb{Z}^n : \sum_{i=1}^{n} x_i c_i = 0 \big\} $.
We need to show that $ \bigcup_{{\bf x} \in {\mathcal L}} ( \triangle^n_h + {\bf x} ) = \mathbb{Z}^n $,
i.e., that every vector $ {\bf y} \in \mathbb{Z}^n $ is contained in some translate
of the simplex $ \triangle^n_h $ by a vector from the lattice $ {\mathcal L} $.
Take an arbitrary $ {\bf y} \in \mathbb{Z}^n $, and suppose that $ \sum_{i=1}^{n} y_i c_i = a \in G $.
Since $ C $ is an $ h $-basis, we know that there exists a vector $ \boldsymbol{\alpha} \in \triangle^n_h $
such that $ \sum_{i=1}^{n} \alpha_i c_i = a $ (see \eqref{eq:basis}).
Then consider the vector $ {\bf x} = {\bf y} - \boldsymbol{\alpha} $.
Clearly, $ \sum_{i=1}^{n} x_i c_i = 0 $, i.e., $ {\bf x} \in {\mathcal L} $, and
$ {\bf y} = {\bf x} + \boldsymbol{\alpha} \in {\bf x} + \triangle^n_h $, proving
that $ (\triangle^n_h, {\mathcal L}) $ is a covering of $ \mathbb{Z}^n $.
Furthermore, it is easily checked that the mapping $ [{\bf y}] \mapsto \sum_{i = 1}^n y_i c_i $
is an isomorphism between the groups $ \mathbb{Z}^n / {\mathcal L} $ and $ G $,
where $ [{\bf y}] \defeq {\bf y} + {\mathcal L} $ are the cosets of the lattice
$ {\mathcal L} $ (elements of $ \mathbb{Z}^n / {\mathcal L} $).

To prove the converse statement, suppose that we are given a lattice covering
$ (\triangle^n_h, {\mathcal L}') $ of $ \mathbb{Z}^n $, meaning that an arbitrary
point $ {\bf y} \in \mathbb{Z}^n $ can be written as $ {\bf y} = {\bf x} + \boldsymbol{\alpha} $,
$ {\bf x} \in {\mathcal L}' $, $ \boldsymbol{\alpha} \in \triangle^n_h $.
This implies that $ [{\bf y}] = [\boldsymbol{\alpha}] = \sum_{i=1}^n \alpha_i [{\bf e}^{(i)}] $,
where $ {\bf e}^{(i)} $ is the unit vector having a $ 1 $ at the $ i^\text{th} $ coordinate
and zeros elsewhere.
In other words, every element $ [{\bf y}] \in \mathbb{Z}^n / {\mathcal L}' $ can be
written in the form $ [{\bf y}] = \sum_{i=1}^n \alpha_i [{\bf e}^{(i)}] $ for some
$ \boldsymbol{\alpha} \in \triangle^n_h $.
By \eqref{eq:basis}, this  means that $ C' \defeq \big\{ [{\bf 0}], [{\bf e}^{(1)}], \ldots, [{\bf e}^{(n)}] \big\} $
is an $ h $-basis in the group $ \mathbb{Z}^n / {\mathcal L}' $.
(Note that the cardinality of $ C' $ may be smaller than $ n + 1 $ as some of the
cosets in it may coincide.)
\end{proof}

We can now state the covering analogue of Theorem \ref{thm:main}.
Let $ \psi(h,n) $ denote the size of the \emph{largest} Abelian group containing an
$ h $-basis of size $ n + 1 $.

\begin{theorem}
  For every fixed $ n \geq 1 $,
\begin{equation}
\label{eq:limpsi}
  \lim_{h\to\infty} \frac{ \psi(h,n) }{ h^n } = \frac{ 1 }{ n!\ \tl(\triangle^n) } .
\end{equation}
\end{theorem}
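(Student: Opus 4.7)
The plan is to mirror the proof of Theorem \ref{thm:main}, substituting the $B_h$-set/packing correspondence with the $h$-basis/covering correspondence supplied by Theorem \ref{thm:Ch}. By that theorem, every Abelian group $G$ of order $\psi(h,n)$ containing an $h$-basis of cardinality $n+1$ corresponds to a lattice covering $(\triangle^n_h, \mathcal{L})$ of $\mathbb{Z}^n$ with $|G| = \det(\mathcal{L})$; conversely, every such lattice covering yields an $h$-basis of cardinality at most $n+1$ in $\mathbb{Z}^n/\mathcal{L}$, which can be padded to one of cardinality exactly $n+1$ whenever $\det(\mathcal{L}) \geq n+1$. For $h$ large enough the latter restriction is automatic, so
\begin{equation}
\label{eq:psitheta}
  \psi(h,n) = \max_{\mathcal{L}} \det(\mathcal{L}) = \frac{\binom{h+n}{n}}{\min_{\mathcal{L}} \vartheta(\triangle^n_h, \mathcal{L})} ,
\end{equation}
where $\mathcal{L}$ ranges over all sublattices of $\mathbb{Z}^n$ for which $(\triangle^n_h, \mathcal{L})$ is a covering of $\mathbb{Z}^n$. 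Since $\binom{h+n}{n}/h^n \to 1/n!$, the claim \eqref{eq:limpsi} reduces to $\min_{\mathcal{L}} \vartheta(\triangle^n_h, \mathcal{L}) \to \tl(\triangle^n)$ as $h \to \infty$.

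For the upper bound on $\psi(h,n)$, I would show that every lattice covering $(\triangle^n_h, \mathcal{L})$ of $\mathbb{Z}^n$ induces a lattice covering of $\mathbb{R}^n$ by the enlarged continuous simplex $(h+n)\triangle^n$ with the same lattice $\mathcal{L}$. Indeed, writing any $\mathbf{y} \in \mathbb{R}^n$ as $\mathbf{y} = \mathbf{k} + \mathbf{f}$ with $\mathbf{k} \in \mathbb{Z}^n$ and $\mathbf{f} \in [0,1)^n$, noting that $\mathbf{k} \in \triangle^n_h + \mathcal{L}$, and using the elementary inclusion $\triangle^n_h + [0,1)^n \subseteq (h+n)\triangle^n$, one obtains $\mathbf{y} \in (h+n)\triangle^n + \mathcal{L}$. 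By the definition of $\tl(\triangle^n)$ this forces $\det(\mathcal{L}) \leq (h+n)^n / \bigl(n!\,\tl(\triangle^n)\bigr)$, whence $\limsup_{h\to\infty} \psi(h,n)/h^n \leq 1/(n!\,\tl(\triangle^n))$.

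For the matching lower bound I would replicate the discretization step of the proof of Theorem \ref{thm:main}, replacing the shrunken body $(1-\varepsilon)\triangle^n$ by the enlarged body $(1+\varepsilon)\triangle^n$: whereas packings require interior slack between translates, coverings require interior overlap. Fix $\varepsilon \in (0,1)$, let $(\triangle^n, \mathcal{L}^*)$ be an optimal lattice covering of $\mathbb{R}^n$, and consider the covering $((1+\varepsilon)\triangle^n, \mathcal{L}^*)$ of density $(1+\varepsilon)^n \tl(\triangle^n)$. Rounding the generators $\mathbf{v}_1, \ldots, \mathbf{v}_n$ of $\mathcal{L}^*$ coordinatewise to $\tfrac{1}{h}\mathbb{Z}^n$ produces lattices $\mathcal{L}_h \to \mathcal{L}^*$, and by the same continuity properties of lattices and their fundamental domains invoked in the proof of Theorem \ref{thm:main} (\cite{groemer, gruber+lekk}), the overlap provided by the factor $(1+\varepsilon)$ ensures that $((1+\varepsilon)\triangle^n, \mathcal{L}_h)$ remains a covering of $\mathbb{R}^n$ for all $h \geq h_0(n,\varepsilon)$. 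Intersecting with $\tfrac{1}{h}\mathbb{Z}^n$ and rescaling by $h$ yields a lattice covering of $\mathbb{Z}^n$ by $(1+\varepsilon) h \triangle^n \cap \mathbb{Z}^n = \triangle^n_{\lfloor (1+\varepsilon)h\rfloor}$ whose asymptotic density equals $(1+\varepsilon)^n \tl(\triangle^n)$. Relabeling $h' = \lfloor(1+\varepsilon)h\rfloor$ and letting $\varepsilon \to 0$ then gives $\min_{\mathcal{L}} \vartheta(\triangle^n_{h'}, \mathcal{L}) \leq (1+o(1))\,\tl(\triangle^n)$, which via \eqref{eq:psitheta} yields the desired $\liminf_{h\to\infty} \psi(h,n)/h^n \geq 1/(n!\,\tl(\triangle^n))$.

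The main obstacle, as in Theorem \ref{thm:main}, is the rigorous justification of this discretization: one must ensure that the perturbed lattices $\mathcal{L}_h$ inherit the covering property from $\mathcal{L}^*$. The key point is that ``shrink by $(1-\varepsilon)$'' in the packing setting and ``enlarge by $(1+\varepsilon)$'' in the covering setting play dual roles, and the continuity of the covering density on the space of lattices---together with the strict overlap afforded by the factor $(1+\varepsilon)$---yields exactly the robustness needed to push the argument through.
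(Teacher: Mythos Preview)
Your proposal is correct and follows essentially the same route as the paper: the upper bound on $\psi(h,n)$ via the inclusion $\triangle^n_h + [0,1)^n \subseteq (h+n)\triangle^n$ is exactly the paper's $C^n_h$ argument stated directly, and the lower bound is the same $(1+\varepsilon)$-enlargement/discretization dual to the packing proof. The only cosmetic differences are your explicit handling of the padding issue for bases of cardinality $<n+1$ and the relabeling $h' = \lfloor(1+\varepsilon)h\rfloor$ at the end, both of which the paper leaves implicit.
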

\begin{proof}
  Theorem \ref{thm:Ch} implies that the lattice covering density of the discrete
simplex $ \triangle^n_h $ is
\begin{equation}
\label{eq:discretecoverdens}
  \tl(\triangle^n_h) = \frac{ \binom{h + n}{n} }{ \psi(h,n) } .
\end{equation}
Hence, to prove the theorem it is enough to show that
$ \lim_{h\to\infty} \tl(\triangle^n_h) = \tl(\triangle^n) $, or equivalently that, for
every $ \varepsilon' \in (0,1) $ and $ h $ large enough,
\begin{equation}
\label{eq:contcoverdens}
  (1 - \varepsilon') \tl(\triangle^n)  <  \tl(\triangle^n_h)  <  (1 + \varepsilon') \tl(\triangle^n) .	
\end{equation}

We first prove the left-hand side of \eqref{eq:contcoverdens}.
Observe the body $ C^n_h \defeq \bigcup_{{\bf x} \in \triangle^n_h} \big([0, 1]^n + {\bf x}\big) \subset \mathbb{R}^n $.
In words, $ C^n_h $ is the union of unit cubes $ [0, 1]^n $ translated to the
points of the discrete simplex $ \triangle^n_h $.
It is not hard to see that if $ \left(\triangle^n_h, {\mathcal L}\right) $ is a covering
of $ \mathbb{Z}^n $, then $ \left( C^n_h, {\mathcal L} \right) $ is a covering of $ \mathbb{R}^n $.
This implies that $ \left( (h + n) \triangle^n, {\mathcal L} \right) $ is also a covering of
$ \mathbb{R}^n $ because $ C^n_h \subset (h + n) \triangle^n $.
Now suppose that $ \left(\triangle^n_h, {\mathcal L}\right) $ is optimal, meaning that
$ \det({\mathcal L}) = \psi(h,n) $.
Then the density of the induced covering $ \left( (h + n) \triangle^n, {\mathcal L} \right) $
of $ \mathbb{R}^n $ is
\begin{equation}
\label{eq:tlinduced}
  \vartheta((h + n) \triangle^n, {\mathcal L})
    =     \frac{(h + n)^n / n!}{\det({\mathcal L})}
    =     \frac{(h + n)^n / n!}{\psi(h,n)}
    \geq  \tl(\triangle^n) ,
\end{equation}
where the last inequality is simply a consequence of the definition of $ \tl $.
The left-hand inequality in \eqref{eq:contcoverdens} now follows from \eqref{eq:discretecoverdens},
\eqref{eq:tlinduced}, and the fact that $ \binom{h + n}{n} > (1 - \varepsilon') (h + n)^n / n! $
for every $ \varepsilon' \in (0,1) $ and large enough $ h $ (recall that $ n $ is fixed).

The proof of the right-hand inequality in \eqref{eq:contcoverdens} is analogous to the
proof of the corresponding statement for packings (see the proof of Theorem \ref{thm:main}).
Take an optimal covering $ \left(\triangle^n, {\mathcal L}^*\right) $ of $ \mathbb{R}^n $,
and consider the covering $ \left((1+\varepsilon)\triangle^n, {\mathcal L}^*\right) $ of
density $ (1+\varepsilon)^n \tl(\triangle^n) $, where $ \varepsilon $ is chosen so that
$ (1+\varepsilon)^n < 1+\varepsilon' $.
Define a sequence of lattices $ {\mathcal L}_h \subseteq \frac{1}{h} \mathbb{Z}^n $
such that $ {\mathcal L}_h \to {\mathcal L}^* $ as $ h \to \infty $.
Then, for $ h $ large enough, $ ((1+\varepsilon) \triangle^n, {\mathcal L}_h) $ is a
covering of $ \mathbb{R}^n $ and
$ \left( (1+\varepsilon) h \triangle^n \cap \mathbb{Z}^n, h {\mathcal L}_h \right) $
is a covering of $ \mathbb{Z}^n $.
The density of the latter satisfies
\begin{equation}
  \lim_{h \to \infty} \frac{ \left| (1+\varepsilon) h \triangle^n \cap \mathbb{Z}^n \right| }{ \det(h {\mathcal L}_h) }
  = (1+\varepsilon)^n \tl(\triangle^n) 
	< (1 + \varepsilon') \tl(\triangle^n) ,
\end{equation}
which proves the right-hand inequality in \eqref{eq:contcoverdens}.
\end{proof}

In particular, from \eqref{eq:coveringexact} and \eqref{eq:limpsi} we get
$ \lim_{h\to\infty} \frac{ \psi(h,2) }{ h^2 } = \frac{1}{3} $.
Hence, the largest group containing an $ h $-basis of cardinality $ 3 $ is about
$ 9/4 $ times smaller than the smallest group containing a $ B_h $ set of the same
size, for large $ h $.

Finally, we note that group-theoretic analogues of lattice coverings by
$ \triangle^n_{r} - \triangle^n_{t} $ can be defined in a similar way, by the condition
\begin{equation}
  G = \Big\{ \alpha_1 c_1 + \cdots + \alpha_n c_n : \boldsymbol{\alpha} \in \triangle^n_{r} - \triangle^n_{t} \Big\}
\end{equation}
(The familiar notion of $ (v, k, \lambda) $-difference sets \cite{designs} is
a particular instance of objects of this sort, see \cite[Thm 4.1]{kovacevic}.)
Explicit constructions of such coverings are left as another problem for future
investigation.

\section*{Acknowledgment}

The authors would like to thank the anonymous referees for a thorough review and
many helpful comments on the original version of the manuscript, as well as for
pointing out the work \cite{xue+zong}.

\bibliographystyle{amsplain}

\begin{thebibliography}{99}

\bibitem{designs}
   T. Beth, D. Jungnickel, and H. Lenz,
   \emph{Design Theory}, 2nd ed.,
   Cambridge University Press, 1999.
\bibitem{bose+chowla}
   R. C. Bose and S. Chowla,
   ``Theorems in the Additive Theory of Numbers,''
   \emph{Comment. Math. Helv.}, vol. 37, no. 1, pp. 141--147, Dec. 1962.
\bibitem{chen}
   S. Chen,
   ``On the Size of Finite Sidon Sequences,''
   \emph{Proc. Amer. Math. Soc.}, vol. 121, no. 2, pp. 353--356, Jun. 1994.
\bibitem{derksen}
   H. Derksen,
   ``Error-Correcting Codes and $ B_h $-Sequences,''
   \emph{IEEE Trans. Inform. Theory}, vol. 50, no. 3, pp. 476--485, Mar. 2004.
\bibitem{fejes-toth}
   G. Fejes T\'{o}th,
   ``New Results in the Theory of Packing and Covering,''
   in P. M. Gruber and J. M. Wills (Eds.), \emph{Convexity and Applications}, Birkh\"{a}user, 1983, pp. 318--359.
\bibitem{golomb+welch}
   S. W. Golomb and L. R. Welch,
   ``Perfect Codes in the Lee Metric and the Packing of Polyominoes,''
   \emph{SIAM J. Appl. Math.}, vol. 18, no. 2, pp. 302--317, Mar. 1970.
\bibitem{groemer}
	 H. Groemer,
   ``Continuity Properties of Voronoi Domains,''
   \emph{Monatsh. Math.}, vol. 75, no. 5, 423--431, Oct. 1971.
\bibitem{gruber}
   P. M. Gruber,
   \emph{Convex and Discrete Geometry},
   Springer, 2007.
\bibitem{gruber+lekk}
   P. M. Gruber and C. G. Lekkerkerker,
   \emph{Geometry of Numbers},
   2nd ed., North--Holland, 1987.
\bibitem{sequences}
   H. Halberstam and K. F. Roth,
   \emph{Sequences},
   Springer-Verlag, 1983.
\bibitem{hoylman}
   D. J. Hoylman,
   ``The Densest Lattice Packing of Tetrahedra,''
   \emph{Bull. Amer. Math. Soc.}, vol. 76, no. 1, pp. 135--137, 1970.
\bibitem{jia}
   X.-D. Jia,
   ``On Finite Sidon Sequences,''
   \emph{J. Number Theory}, vol. 44, no. 1, pp. 84--92, May 1993.
\bibitem{kovacevic}
   M. Kova\v{c}evi\'c,
   ``Codes in $ A_n $ Lattices:\ Geometry of $ B_h $ Sets and Difference Sets,''
   preprint available at arXiv:1409.5276v4 [math.CO].
\bibitem{obryant}
   K. O'Bryant,
   ``A Complete Annotated Bibliography of Work Related to Sidon Sequences,''
   \emph{Electron. J. Combin.}, \#DS11, 39 pp. (electronic), 2004.
\bibitem{rogers}
   C. A. Rogers,
   \emph{Packing and Covering},
   Cambridge University Press, 1964.
\bibitem{roth}
   R. M. Roth,
   \emph{Introduction to Coding Theory},
   Cambridge University Press, 2006.
\bibitem{sidon}
   S. Sidon,
   ``Ein Satz \"{u}ber Trigonometrische Polynome und Seine Anwendung in der Theorie der Fourier-Reihen'' (in German),
   \emph{Math. Ann.}, vol. 106, no. 1, pp. 536--539, 1932.
\bibitem{singer}
   J. Singer,
   ``A Theorem in Finite Projective Geometry and Some Applications to Number Theory,''
   \emph{Trans. Amer. Math. Soc.}, vol. 43, pp. 377--385, 1938.
\bibitem{stein84}
   S. Stein,
   ``Packings of $ R^n $ by Certain Error Spheres,''
   \emph{IEEE Trans. Inform. Theory}, vol. 30, no. 2, pp. 356--363, Mar. 1984.
\bibitem{stein+szabo}
   S. Stein and S. Szab\'{o},
   \emph{Algebra and Tiling: Homomorphisms in the Service of Geometry},
   The Mathematical Association of America, 1994.
\bibitem{xue+zong}
   F. Xue and C. Zong,
   ``On Lattice Coverings by Simplices,''
   preprint available at arXiv:1502.04508 [math.MG].

\end{thebibliography}

\end{document}